\theoremstyle{definition}
\newtheorem{lemma}{Lemma}
\newtheorem{proposition}[lemma]{Proposition}
\newtheorem*{definition}{Definition}
\newtheorem*{theorem*}{Theorem}
\let\originaliota\iota
\renewcommand{\iota}{\dot\originaliota}
\Large\color{black}}
\Large\color{black}}
\newcommand{\monthyear}{%
  \ifcase \month \or January\or February\or March\or April\or May\or June%
  \or July\or August\or September\or October\or November\or December\fi, \number \year
} 
\newcommand{\C}{\mathbb{C}}
\newcommand{\R}{\mathbb{R}}
\begin{document}

\title{Geometric quantization of generalized Hirzebruch fibrations}
\author{Andrea Piccirilli
\thanks{I would be grateful for potential comments and corrections.
\newline
Email address: \href{mailto:apiccirilli@ethz.ch}{apiccirilli@ethz.ch}.}}
\date{}
\maketitle

\begin{abstract}
 Hirzebruch surfaces, defined as the projectivization of line bundles over $\C\mathbb{P}^1$, support a toric action and thus represent an infinite class of symplectic toric manifolds of complex dimension 2. In this paper, an infinite class of toric manifolds given as projective bundles over $\mathbb{C}\mathbb{P}^d$ will be constructed for every complex dimension $d$ and it will be shown that each manifold supports a symplectic structure. With the toric and symplectic structure of the manifolds at our disposal, we then study their geometric quantization and how it relates to different values of the twisting parameter of the fibrations.
\end{abstract}
\section{Introduction}
%The initial motiviation for this work comes from string theory. It is well-known that superstring theory, one of the most promising approaches to quantum gravity, requires 10 dimensions for mathematical consistency. However, the observed universe is only 4-dimensional and thus the extra six dimensions should be compactified. The vacuum spacetime has the form 
%\begin{equation*}
   % \mathcal{M}^{10} = \mathbb{R}^{3,1}\times X^6,
%\end{equation*}
%where $X$ is a compact (often even closed) smooth manifold.
%Various compactification schemes have been proposed, with the most popular choice being Calabi-Yau threefolds: complex manifolds of dimensions 3 with trivial canonical bundle, i.e. $K_X \cong \mathcal{O}_X$.  
%It would be interesting to study nontrivial compactifications which support a structure of toric manifold, and indeed toric Calabi-Yau manifolds exist in all complex dimensions. However, they are necessarily non-compact and are therefore not possible compactifications for superstring theories. For this reason, we tried to generalize the Hirzebruch surfaces to a three-dimensional complex surface supporting an action of the 3-torus $\mathbb{T}^3:=S^1\times S^1 \times S^1$. We then noticed that the generalization could be carried over to any possible complex dimension.
In this paper, we present an explicit construction of a class of manifolds that we call 'generalized Hirzebruch fibrations'. These varieties appear in the literature and are known to be toric: in fact, any line bundle over a toric variety carries a natural toric structure \cite{Audin}, however, their symplectic aspects are typically treated only implicitly. Here, we give a direct symplectic approach by describing these varieties explicitly as the zero set of certain homogeneous polynomials. This coordinate description allows us to define an effective toric action and compute the corresponding moment polytope.
\\
The main theorem we will use relates the dimension of the space of holomorphic sections (i.e., the geometric quantization with respect to a Kähler polarization) to the number of integer lattice points in the moment polytope \cite{Mard}:
\\
\begin{theorem*}
\label{main}
\textit{Let $(M^{2n},\omega)$ be a symplectic toric manifold with associated moment polytope $\Delta \subset \mathbb{R}^n$ with integer vertices. The dimension of the quantization space (that is, the number of independent holomorphic sections of the line bundle $\mathbb{L}$) is equal to the number of integer lattice points in $\Delta$:}
\begin{equation}
   \dim \mathcal{Q}(M,\omega) = \#(\Delta \cap \mathbb{Z}^{n}).
    \end{equation}
\end{theorem*}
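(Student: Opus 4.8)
The plan is to exploit the Hamiltonian $T^n$-action on $(M,\omega)$, which is the very structure that produces the moment polytope $\Delta$. First I would lift this action to the prequantum line bundle $\mathbb{L}$: because the action is Hamiltonian with moment map $\mu$ satisfying $\mu(M)=\Delta$, the Kostant–Souriau construction promotes each infinitesimal generator to an operator of the form $\nabla_{X_\xi}+2\pi i\,\langle\mu,\xi\rangle$, yielding a lift that preserves both the connection and the holomorphic (Kähler) polarization. This endows the quantization space $\mathcal{Q}(M,\omega)=H^0(M,\mathbb{L})$ with a linear $T^n$-representation.

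Since $T^n$ is compact and abelian, this representation splits as a direct sum of one-dimensional weight spaces indexed by the character lattice,
\begin{equation}
H^0(M,\mathbb{L}) \;=\; \bigoplus_{\lambda\in\mathbb{Z}^n} H^0(M,\mathbb{L})_\lambda ,
\end{equation}
where $t\in T^n$ acts on $H^0(M,\mathbb{L})_\lambda$ through the character $t^\lambda$. The theorem then reduces to two claims: that $H^0(M,\mathbb{L})_\lambda\neq 0$ precisely when $\lambda\in\Delta\cap\mathbb{Z}^n$, and that in that case $\dim H^0(M,\mathbb{L})_\lambda=1$. To pin down the weights I would restrict to the open dense orbit $U\cong(\C^*)^n\subset M$ on which the complexified torus acts freely and $\mathbb{L}$ trivializes. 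A holomorphic section restricted to $U$ is a holomorphic function on $(\C^*)^n$, hence a convergent Laurent series $\sum_\lambda c_\lambda z^\lambda$, and the weight-$\lambda$ component is exactly the monomial $z^\lambda$. Each monomial spans at most a one-dimensional space, so multiplicity one is automatic; what remains is to decide which monomials extend holomorphically across the toric boundary divisors to genuine global sections of $\mathbb{L}$.

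Here the moment map does the decisive work. The facets of $\Delta$ are cut out by inequalities $\langle v_i,x\rangle\geq -a_i$ with $v_i$ the primitive inward normals, and these same integers $a_i$ record the order to which $\mathbb{L}$ degenerates along the boundary divisors $D_i$. The monomial $z^\lambda$ extends across $D_i$ without a pole exactly when $\langle v_i,\lambda\rangle+a_i\geq 0$, and imposing this for every facet is precisely the statement $\lambda\in\Delta$. Summing the one-dimensional contributions over the admissible weights then gives
\begin{equation}
\dim\mathcal{Q}(M,\omega)=\sum_{\lambda\in\Delta\cap\mathbb{Z}^n}1=\#(\Delta\cap\mathbb{Z}^n).
\end{equation}

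I expect the extension step to be the main obstacle: one must make rigorous the dictionary between the symplectic moment polytope produced by our explicit construction and the boundary data $(v_i,a_i,D_i)$ controlling the pole orders. Concretely, in the affine chart adapted to each vertex of $\Delta$ one has to verify that the trivialization of $\mathbb{L}$ degenerates along $D_i$ with exactly the order $a_i$, so that the growth condition on the Laurent monomial becomes the stated linear inequality; for the generalized Hirzebruch fibrations this local normal form should follow directly from the homogeneous-coordinate description. As an independent check of the total, one can confirm the count via Hirzebruch–Riemann–Roch together with Demazure vanishing $H^{>0}(M,\mathbb{L})=0$, which identifies $\dim H^0(M,\mathbb{L})$ with $\chi(M,\mathbb{L})=\int_M e^{c_1(\mathbb{L})}\,\mathrm{Td}(M)$ and hence with the lattice-point count through the toric Euler–Maclaurin formula.
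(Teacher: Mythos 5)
There is nothing in the paper to compare your argument against: the paper never proves this theorem. It is stated as a quoted result and imported wholesale from Hamilton \cite{Mard}, and is then only \emph{applied} (to $\C\mathbb{P}^N$ and to the generalized Hirzebruch fibrations). So your proposal should be judged on its own merits, and on those merits it is a correct outline of the classical K\"ahler-polarization proof, essentially the Danilov--Demazure argument of toric geometry (compare \cite{Danilov}, which is already in the bibliography); note that the cited reference \cite{Mard} actually establishes the statement for the singular \emph{real} polarization coming from the moment map fibration, so your route is genuinely different from the paper's source even though the counts agree. Your skeleton is sound: the Kostant lift, the weight decomposition of $H^0(M,\mathbb{L})$ under the compact torus, multiplicity at most one via restriction to the dense orbit (justified because a holomorphic section vanishing on a dense open subset of the connected $M$ vanishes identically), and the facet inequalities $\langle v_i,\lambda\rangle+a_i\geq 0$ as the extension criterion, which together give exactly $\#(\Delta\cap\mathbb{Z}^n)$. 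Two points deserve to be made explicit rather than left implicit. First, the integer-vertex hypothesis is precisely what allows the infinitesimal operators $\nabla_{X_\xi}+2\pi i\langle\mu,\xi\rangle$ to exponentiate to an action of $T^n$ itself rather than of its universal cover; you use the lift but never say where integrality enters. Second, the step you flag as ``the main obstacle'' is indeed the only real gap: the trivialization of $\mathbb{L}$ over the open orbit is equivariant only up to a character, so the Laurent weights are a priori pinned down only modulo a lattice translation, and one must check that the moment-map normalization (equivalently, the choice of Kostant lift, whose weight at each fixed point is the corresponding vertex $\mu(p)\in\mathbb{Z}^n$) places the admissible weights at $\Delta$ itself and not at a translate $\Delta-\lambda_0$; fixing the lift by its weight at a single fixed point closes this. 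Your Hirzebruch--Riemann--Roch plus Demazure-vanishing cross-check is also correct and would serve as a complete independent proof of the dimension count, though it identifies the total dimension without exhibiting the monomial basis $\mathcal{B}$ that the paper's later computations implicitly rely on.
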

The condition of integer vertices is a necessary and sufficient condition for the geometric quantization of symplectic toric manifolds. The quantization space is a complex vector space of holomorphic functions in $n$ variables $z_1,...,z_2$ spanned by 
\begin{equation}
    \mathcal{B}:=\Big\{z^{j_1}_1...z^{j_n}_n\:|\:\{j_1,...,j_n\}\in \Delta \cap \mathbb{Z}^{n}\Big\}.
\end{equation}
By using this result and our symplectic toric description of the generalized Hirzebruch fibration,
we derive an exact formula for their quantization dimension as a function of the twisting parameter. We then show that this function satisfies a linear recurrence relation, and we analyze its asymptotic behavior in relation to the symplectic volume of the manifold.
\\
\\
In Section 1, we illustrate how quantization for symplectic toric manifolds works by studying complex projective space. This will serve not only as an exposition of the methods in the paper, but the result obtained here will be needed in the later sections. Section 2 contains the definition of generalized Hirzebruch fibrations and a detailed construction of their toric action and moment polytope
In Section 3, we study the behavior of the quantization for these fibrations: in particular, we obtain a recursion relation and study the limit where the Chern classes of the fibration becomes large. 
\\
\\
%To the best of our knowledge, these explicit results on geometric quantization for Hirzebruch-type geometries are novel.

\section{Primer: quantization of $\C\mathbb{P}^N$}
In this section, we compute the quantization for complex projective space. This not only illustrates the methods in a simpler setting than generalized Hirzebruch fibrations, but, given that these fibrations are over $\C\mathbb{P}^d$, the result we obtain here will be needed in the next sections.
\\
\\
We take as symplectic form $\omega_b := b\omega_{FS}$ for a $b \in \mathbb{Z}$, where, for homogeneous coordinates $[z_0:...:z_N]$, we take the following Fubini-Study form:
\begin{equation*}
    \omega_{FS} = i\partial\overline{\partial}\log\bigg(\sum_{i = 0}^{N}|z_i|^2\bigg).
\end{equation*}
With this normalization, the volume of $\C\mathbb{P}^N$ is 
\begin{equation*}
    \text{Vol}(\C\mathbb{P}^N) = \int_{\C\mathbb{P}^N}\omega^N_{FS} = (2\pi)^n,
\end{equation*}
that is, we have a factor $2^n$ of difference from the usual form. This is to ensure that all vertices of the moment polytope have integer coordinates, making the quantization well-defined. 
\\
\\
The natural action $\mathbb{T}^{N}$ on $\C\mathbb{P}^N$ is 
\begin{equation}
    (\theta_1,...,\theta_N) \cdot [z_0:...:z_n] := [z_0:e^{i\theta_1}z_1:...:e^{i\theta_n}z_n],
\end{equation}
which is effective. The moment map $\mu: \: \C\mathbb{P}^N \to \text{Lie}(\mathbb{T}^{N})^* \cong \mathbb{R}^N$ is given by
\begin{equation*}
    \mu([z_0 : ... :z_N]) = b\bigg(\frac{|z_1|^2}{|z_0|^2+|z_1|^2+...+|z_N|^2},...,\frac{|z_N|^2}{|z_0|^2+|z_1|^2...+|z_N|^2}\bigg).
\end{equation*}
\\
The fixed points of $\mu$ are $[1:0:...:0],...,[0:...:0:1]$.
The associated moment polytope is the following simplex:
\begin{equation*}
    \Delta^{N}_b := \bigg\{\sum_{i=1}^{N} x_i \leq b \: \bigg| \: 0\leq x_i\leq b \quad \forall\, i = 1,...,N \bigg\}.
\end{equation*}
We want to use the main theorem to compute the dimension of the associated quantization space by counting integer lattice points in $\Delta^{N}_b$.
\\
Let $q = 0,...,b$. The integer lattice points correspond exactly to the number of solutions to the following Diophantine equation:
\begin{equation*}
    \sum_{i=1}^{N} k_i = q, \quad k_i \in \mathbb{Z}.
\end{equation*}
This number is exactly $\binom{q+N-1}{N-1}$. Therefore, the total number of integer points is given by 
\begin{equation*}
  \#(\Delta \cap \mathbb{Z}^N) = \sum_{q=0}^{b} \binom{q+N-1}{N-1}
\end{equation*}
\begin{lemma} \textit{1}
 \begin{equation*}
     \sum_{q=0}^{b} \binom{q+N-1}{N-1} = \binom{b+N}{N}.
 \end{equation*}
\end{lemma}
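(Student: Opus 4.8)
The plan is to prove the identity $\sum_{q=0}^{b}\binom{q+N-1}{N-1}=\binom{b+N}{N}$, which is a form of the hockey stick (or Christmas stocking) identity. I see two natural routes.

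The first route is a direct induction on $b$. The base case $b=0$ reads $\binom{N-1}{N-1}=1=\binom{N}{N}$, which holds. For the inductive step, I would assume $\sum_{q=0}^{b-1}\binom{q+N-1}{N-1}=\binom{b+N-1}{N}$ and then add the final term $\binom{b+N-1}{N-1}$. The whole step then reduces to verifying Pascal's rule in the form
\begin{equation*}
    \binom{b+N-1}{N}+\binom{b+N-1}{N-1}=\binom{b+N}{N},
\end{equation*}
which is immediate. This is clean and essentially self-contained; the only thing to be careful about is reindexing so that the inductive hypothesis is applied to the correct sum.

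The second route, which I find more transparent, is a telescoping argument that makes the "hockey stick" structure explicit. The idea is to rewrite each summand using Pascal's rule as a difference:
\begin{equation*}
    \binom{q+N-1}{N-1}=\binom{q+N}{N}-\binom{q+N-1}{N}.
\end{equation*}
Summing over $q=0,\dots,b$ then telescopes, leaving $\binom{b+N}{N}-\binom{N-1}{N}$, and since $\binom{N-1}{N}=0$ the result follows at once. This avoids invoking the inductive machinery and exposes the mechanism directly.

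I expect no serious obstacle here, since the statement is a standard binomial identity; the only mildly delicate points are bookkeeping ones. In the telescoping approach I must confirm the boundary term $\binom{N-1}{N}$ vanishes (it does, as the lower index exceeds the upper), and in either approach I should state the convention $\binom{m}{k}=0$ for $m<k$ so the edge cases are unambiguous. A purely combinatorial alternative also exists: both sides count the number of ways to choose $N$ elements from $\{0,1,\dots,b+N-1\}$, partitioned according to the value of the largest chosen element, but I would favor the telescoping proof for its brevity and because it slots cleanly into the surrounding computation.
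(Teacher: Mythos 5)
Your proposal is correct, and your first route is essentially the paper's own proof: the paper argues by induction on $b$, with base case $b=0$ and an inductive step that amounts to Pascal's rule $\binom{b+N}{N}+\binom{b+N}{N-1}=\binom{b+1+N}{N}$, exactly as you sketch (incidentally, the paper's displayed computation contains a typo in its middle line, where $\binom{q+N-1}{N-1}$ appears in place of the inductive-hypothesis value $\binom{b+N}{N}$, but the intended argument is the one you give). Your favored telescoping route, rewriting each summand as $\binom{q+N-1}{N-1}=\binom{q+N}{N}-\binom{q+N-1}{N}$ and collapsing the sum, is a genuinely different packaging of the same application of Pascal's rule: it replaces the induction scaffolding with a one-line collapse, at the small cost of having to justify the boundary term $\binom{N-1}{N}=0$ via the convention that a binomial coefficient vanishes when the lower index exceeds the upper, a point you correctly flag. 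The induction is slightly more self-contained for a reader unwilling to adopt that convention; the telescoping version is shorter and makes the hockey-stick mechanism explicit, and your combinatorial aside (counting $N$-subsets of $\{0,\dots,b+N-1\}$ by largest element) would serve equally well. Any of the three would slot into the paper without difficulty.
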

\begin{proof}
    We prove the proposition by induction on $b$:
    \begin{itemize}
        \item $b=0$: both sides are equal to 1
        \item Induction step: $b \to b+1$: 
          \begin{equation*}
    \begin{split}
      \sum_{q=0}^{b+1} \binom{q+N-1}{N-1} & = \sum_{q=0}^{b} \binom{q+N-1}{N-1}+
      \binom{b+N}{N-1}\\
      & = \binom{q+N-1}{N-1} +\binom{b+N}{N-1}\\
      & = \binom{b+1+N}{N},
  \end{split}
  \end{equation*} 
    \end{itemize}
    proving the formula above.
\end{proof}
\noindent
\\
By using the main theorem, we obtain the result:
    \begin{equation}
        \dim (\mathcal{Q}(\C\mathbb{P}^N, \omega_b)) = \binom{b+N}{N}.
\end{equation}
\section{Generalizations Hirzebruch fibrations}
To begin, we consider the following projectivization of line bundles over $\C\mathbb{P}^d$:
\begin{equation*}
    \pi  : \:\text{Proj}(\mathcal{O}(-m)\oplus \mathcal{O}(-n)) \to \C\mathbb{P}^d.
\end{equation*}
Notice that, because of the projectivization, the following chain of isomorphisms holds:
\begin{equation*}
    \C\mathbb{P}(\mathcal{O}(-m)\oplus \mathcal{O}(-n))  \cong \C\mathbb{P}((\mathcal{O}(-m)\oplus \mathcal{O}(-n))\otimes \mathcal{O}(m))
    \cong \C\mathbb{P}(\mathcal{O}\oplus \mathcal{O}(m-n)).
\end{equation*}
Therefore, we can always assume that one of the two integers is zero and the other is nonnegative: we set $m=0$ and $n \in \mathbb{N}\cup\{0\}$. This leads us to the next and main definition.
\\
\begin{definition}
    \textit{Let $n\in n \in \mathbb{N}\cup\{0\}$. The \textbf{$n$-th generalized Hirzebruch fibration} is defined to be the projectivization of the following bundle over $\C \mathbb{P}^d$: }
    \begin{equation}
        \mathcal{H}^d_n := \text{Proj}(\mathcal{O}\oplus \mathcal{O}(-n)) \to \C\mathbb{P}^d,
    \end{equation}
   \textit{ where $\mathcal{O}(n):= \bigoplus_n \mathcal{O}(1)$ is the $n$-th twisting line bundle of $\C \mathbb{P}^d$ and $\mathcal{O}(-n) : = \mathcal{O}(n)^*$.}
   \\
\end{definition}
\noindent
The parameter $n$ is called \textbf{twisting parameter} of the fibration.
Note that $\mathcal{H}^1_n$ are the familiar Hirzebruch surfaces. We wish to understand this fibration more in detail algebraically.
\\
\\
Let $\alpha := c_1(\mathcal{O}(1))\in H^2(\C \mathbb{P}^d, \mathbb{Z})$ the Chern class of the first twisting line bundle. Moreover, the total Chern class of $\mathcal{O}\oplus \mathcal{O}(-n)$ is 
\begin{equation*}
  c(\mathcal{O}\oplus \mathcal{O}(-n)) = 1-n\alpha  
\end{equation*}
because of the splitting principle. There is a classic result for the cohomology ring of a projectivization $\pi:\:\text{Proj}(E)\to B$ of a vector bundle $E$ of rank $r$: 
\begin{equation*}
    H^*(\text{Proj}(E),\mathbb{Z}) = H^*(B, \mathbb{Z})[\xi]\Big/\Big\langle \xi^r + \pi^*c_1(E)\xi^{r-1}+...+\pi^*c_1(E)\Big\rangle,
\end{equation*}
where $\xi:= c_1(\mathcal{O}_{\text{Proj}(E)}(1))$ is the first Chern class of the tautological line bundle on $\text{Proj}(E)$. The total Chern class is 
\begin{equation*}
    c(\text{Proj}(E)) = \pi^*c(B)\cdot\frac{c(E)}{1+\xi}.
\end{equation*}
For $E = \mathcal{O}\oplus \mathcal{O}(-n)$, the rank is 2. The total Chern class of $\C\mathbb{P}^d$ is $c(\C\mathbb{P}^d) = (1+\alpha)^{d+1}$, therefore the total Chern class of the generalized Hirzebruch fibration is
\begin{equation}
    c(\mathcal{H}^d_n) = \pi^*(1+\alpha)^{d+1}\cdot\frac{1-n\pi^*\alpha}{1+\xi}, 
\end{equation}
where $\xi$ satisfies the relation $\xi^2+n\pi^*\alpha\xi = 0$ in the cohomology ring
\begin{equation*}
    H^*(\mathcal{H}^d_n,\mathbb{Z}) \cong \mathbb{Z}[\alpha,\xi]\Big/\Big\langle \alpha^{d+1}, \xi^2+n\pi^*\alpha\xi \Big\rangle.
\end{equation*}
In particular, the first Chern class is 
\begin{equation*}
    c_1(\mathcal{H}^d_n) = (d+1-n)\pi^*\alpha-\xi.
\end{equation*}
The goal is to put a toric structure on the generalized Hirzebruch manifolds. As it is usually done for the case of Hirzebruch surfaces, we want to embed the abstract surface in a product of complex projective spaces and compute the moment map by finding a suitable restriction on the torus action of the ambient space.
\\
\begin{proposition}
   \textit{ The map}
\begin{equation}
    \begin{split}
  \phi  :\:   \mathcal{H}^d_n & \to \C\mathbb{P}^d\times\C\mathbb{P}^{d+1}\\
   ([x_0:...:x_d],[z_0:z_1]) & \mapsto ([x_0:...:x_d],[z_0:z_1x^n_0:z_1x^n_1:...:z_1x^n_d]).
\end{split}
\end{equation}
 \textit{is an embedding of complex manifolds. }
 \\
\end{proposition}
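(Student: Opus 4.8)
The plan is to factor the map as $\phi = (\pi,\psi)$, where $\pi\colon \mathcal{H}^d_n \to \C\mathbb{P}^d$ is the bundle projection recording the first factor and $\psi\colon \mathcal{H}^d_n \to \C\mathbb{P}^{d+1}$ sends $([x],[z_0:z_1]) \mapsto [z_0 : z_1 x_0^n : \cdots : z_1 x_d^n]$. Since $\mathcal{H}^d_n$ is compact (a $\C\mathbb{P}^1$-bundle over the compact base $\C\mathbb{P}^d$), it suffices to show that $\phi$ is a holomorphic injective immersion: an injective immersion from a compact manifold is automatically a homeomorphism onto its image, and hence an embedding. I would therefore verify, in order: (i) that $\phi$ is well defined and holomorphic, (ii) that it is injective, and (iii) that its differential is injective at every point.

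The main obstacle, and the step deserving the most care, is (i): the formula for $\psi$ mixes the fibre coordinate $z_1$ with the base coordinates $x_j$, so one must check independence of the homogeneous representative of $[x]$ and compatibility with the twist. The overall scaling $(z_0,z_1)\mapsto(\mu z_0,\mu z_1)$ is harmless, since $\psi$ is homogeneous of degree one in $(z_0,z_1)$. For the base, I would use the standard charts $U_i = \{x_i \neq 0\}$, over which $E := \mathcal{O}\oplus\mathcal{O}(-n)$ trivialises as $U_i\times\C^2$, giving charts $U_i\times\C\mathbb{P}^1$ on $\mathcal{H}^d_n$ with fibre coordinate $[z_0:z_1^{(i)}]$ and gluing $z_1^{(j)} = (x_j/x_i)^n\,z_1^{(i)}$ dictated by the transition functions of $\mathcal{O}(-n)$. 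Writing $\psi$ in the $i$-chart in affine base coordinates and clearing the resulting powers of $x_i$ gives the representative $[\,z_0 x_i^n : z_1^{(i)} x_0^n : \cdots : z_1^{(i)} x_d^n\,]$; substituting the transition relation shows that the $i$- and $j$-chart formulas define the same point of $\C\mathbb{P}^{d+1}$, so the factor $x_j^n$ is exactly what cancels the twist. This also shows the image never hits the origin (some $x_k\neq 0$), so $\psi$ is a genuine holomorphic map. Conceptually, $\psi$ is the morphism attached to the subsystem $\langle 1, x_0^n, \ldots, x_d^n\rangle \subset H^0(\mathcal{O}_{\mathbb{P}(E)}(1))$, which makes the well-definedness transparent.

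Granting (i), both injectivity and the immersion property reduce to a single fibrewise statement. The projection $\pi$ already separates points lying over distinct base points and shows that any $v\in\ker d\phi$ must be vertical, so I only need to analyse $\psi$ along a single fibre. Over a fixed $[x]$, the restriction $[z_0:z_1]\mapsto[z_0 : z_1 x_0^n : \cdots : z_1 x_d^n]$ is the projectivisation of the linear map $\C^2\to\C^{d+2}$, $(z_0,z_1)\mapsto(z_0,z_1x_0^n,\ldots,z_1x_d^n)$, which is injective because $(x_0^n,\ldots,x_d^n)\neq 0$. Hence $\psi$ restricts to a linear embedding of $\C\mathbb{P}^1$ into $\C\mathbb{P}^{d+1}$ on every fibre; this gives injectivity on each fibre (hence global injectivity of $\phi$, combined with $\pi$ separating fibres) and forces $d\psi$ to be nonzero on vertical vectors, so $d\phi$ is injective.

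Combining these, $\phi$ is a holomorphic injective immersion of the compact manifold $\mathcal{H}^d_n$, and therefore an embedding. I expect the only real work to be the chart bookkeeping in (i); once the twist is seen to cancel against the factors $x_j^n$, steps (ii) and (iii) follow immediately from the linear-embedding description of $\psi$ on fibres.
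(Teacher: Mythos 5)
Your proof is correct, and its core mechanism is genuinely different from the paper's. The paper follows the same outer skeleton (holomorphic injective immersion from the compact $\mathcal{H}^d_n$ into a Hausdorff target, hence an embedding), but it proves injectivity by direct manipulation of homogeneous representatives---tracking how the fiber coordinate rescales as $[z_0:\lambda^{-n}z_1]$ when $x_i\mapsto\lambda x_i$---and it merely \emph{asserts} the immersion property (``a local computation in any standard coordinate chart shows that the differential of $\phi$ is injective''), with no computation supplied. Your decomposition $\phi=(\pi,\psi)$ instead reduces both injectivity and the immersion property to a single fiberwise fact: over each $[x]$, the map $\psi$ is the projectivization of the injective linear map $(z_0,z_1)\mapsto(z_0,z_1x_0^n,\ldots,z_1x_d^n)$, hence a linear embedding $\C\mathbb{P}^1\hookrightarrow\C\mathbb{P}^{d+1}$; since $\ker d\pi$ is exactly the vertical tangent space, any kernel vector of $d\phi$ is vertical and is killed by the fiberwise embedding. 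This actually closes the gap the paper leaves open at the immersion step. You are also more careful about well-definedness: the paper's claim that $\phi$ is ``defined by homogeneous functions'' glosses over the twist, whereas your chart computation with the transition rule $z_1^{(j)}=(x_j/x_i)^n z_1^{(i)}$ (equivalently, the linear-system interpretation via $\langle 1,x_0^n,\ldots,x_d^n\rangle\subset H^0(\mathcal{O}_{\mathbb{P}(E)}(1))$) makes explicit why the factors $x_j^n$ cancel the twist---the same scaling the paper only invokes implicitly inside its injectivity argument. The trade-off is that the paper's representative-chasing is shorter to write down, while your route is both complete (the differential is genuinely checked) and structurally more transparent.
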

\begin{proof}
We proceed in several steps.
\\
\\
A point in $\mathcal{H}_n^d$ is given by a pair 
\[
\bigl( [x_0 : \cdots : x_d], [z_0 : z_1] \bigr),
\]
where the coordinates are defined up to the usual projective equivalence. Since $\phi$ is defined by the homogeneous functions
\[
z_0 \quad \text{and} \quad z_1 x_i^n \quad (i=0,\dots,d),
\]
the map is independent of the choice of representatives. Moreover, as these functions are holomorphic, $\phi$ is holomorphic.
\\
We now show that it's injective. 
Suppose 
\[
\phi\Bigl( [x_0 : \cdots : x_d], [z_0 : z_1] \Bigr) = \phi\Bigl( [x'_0 : ... : x'_d], [z'_0 : z'_1] \Bigr).
\]
Then, by the definition of $\phi$, we have
\[
[x_0 : \cdots : x_d] = [\widetilde{x}_0 : \cdots : \widetilde{x}_d] \quad \text{in } \mathbb{C}\mathbb{P}^d.
\]
Next, comparing the second component we have
\begin{equation*}
 [z_0 : z_1 x_0^n : \cdots : z_1 x_d^n] = [\widetilde{z}_0 : \widetilde{z}_1 \widetilde{x}_0^n : ... : \widetilde{z}_1 \widetilde{x}_d^n].   
\end{equation*}
Replacing $\widetilde{x}_i$ by $\lambda x_i$, the fiber coordinates scale as $[z_0:\lambda^{-n}z_1]$, therefore
\begin{equation*}
  [\widetilde{z}_0 : \widetilde{z}_1 \widetilde{x}_0^n : ... : \widetilde{z}_1 \widetilde{x}_d^n]
= [z'_0 : \lambda^n z'_1 x_0^n : \cdots : \lambda^n z'_1 x_d^n]
\end{equation*}
Thus, there exists $\mu \in \mathbb{C}^*$ such that
\[
z'_0 = \mu z_0 \quad \text{and} \quad \lambda^n z'_1 = \mu z_1.
\]
It follows that 
\[
[z'_0 : z'_1] = [z_0 : z_1],
\]
so $\phi$ is injective. Moreover, since $\phi$ is defined by homogeneous polynomials on projective coordinates, a local computation in any standard coordinate chart shows that the differential of $\phi$ is injective at every point. Therefore, $\phi$ is an immersion.
\\
Finally, since $\mathcal{H}_n^d$ is compact and $\mathbb{C}\mathbb{P}^d \times \mathbb{C}\mathbb{P}^{d+1}$ is Hausdorff, any holomorphic, injective immersion from a compact complex manifold into a Hausdorff space is a homeomorphism onto its image. Thus, $\phi$ is an embedding.

\end{proof}
\noindent
The manifolds $\mathcal{H}^d_n$ can also be described as the zero locus of homogeneous polynomials of degree $n+1$, which gives them the structure of projective varieties. Let 
\begin{equation*}
    ([x_0:...:x_d],[z_0:y_0:...:y_d]) \in \text{im}(\phi)\subset\C\mathbb{P}^d\times\C\mathbb{P}^{d+1}.
\end{equation*}
By the embedding, we obtain 
\begin{equation*}
    \frac{y_i}{y_0} = \frac{x^n_i}{x^n_0} \Longleftrightarrow y_i x^n_0 - y_0 x^n_i = 0 \quad \forall \: i \in \{1,...,d\}.
\end{equation*}
The $d$ equations above are all independent, while for $i=0$ the equation is trivially satisfied. The other coordinates satisfy the same relations with respective indices, however, these equations are not all functionally independent. The equations in terms of $y_0$ imply all the other relations between the coordinates:
\begin{equation*}
    \frac{y_i}{y_j} = \frac{y_i}{y_0}\frac{y_0}{y_j} = \frac{x^n_i}{x^n_0} \frac{x^n_0}{x^n_j} = \frac{x^n_i}{x^n_j} \Longleftrightarrow y_i x^n_j - y_j x^n_i = 0 \quad \forall \: i,j \in \{1,...,d\}.
\end{equation*}
We write 
\begin{equation}
    \mathcal{H}^d_n = Z(y_0 x^n_i - y_i x^n_0\:|\: i \in \{1,...,d\}).
\end{equation}
With this presentation, we can immediately compute the dimension of the manifolds:
\begin{equation}
    \dim_{\C} \mathcal{H}^d_n = \dim_{\C}(\C\mathbb{P}^d\times\C\mathbb{P}^{d+1})-d = d+1.
\end{equation}
We will now endow the $\mathcal{H}^d_n$ with the structure of a toric manifold, specifically, we will define an action of $\mathbb{T}^{d+1}$ on $\C\mathbb{P}^d\times\C\mathbb{P}^{d+1}$ as follows:
\begin{equation}
\begin{split}
     &(\theta_1,...,\theta_{d+1})\: \cdot \: \big([x_0:x_1:...:x_d],[z_0:y_0:y_1...:y_d]\big) :=\\
     & = \big([x_0:e^{i\theta_1}x_1:...:e^{i\theta_d}x_d],[z_0:e^{i\theta_{d+1}}y_0:e^{i(n\theta_1+\theta_{d+1})}y_1:...:e^{i(n\theta_d+\theta_{d+1})}y_d]\big).
\end{split}
\end{equation}
The action is effective, which makes $\mathcal{H}^d_n$ a toric manifold.
The fixed points of this action are of the form
\begin{equation*}
    ([0:...:\underbrace{1}_{i-\text{th}}:...:0],[0:...:\underbrace{1}_{j-\text{th}}:...:0]\big), 
\end{equation*}
and there are exactly $(d+1)(d+2)$ of them. 
\\
\\
The Kähler form for the product of projective spaces is given as a linear combination  of the respective Fubini-Study forms (with the same normalization as the first section), i.e. 
\begin{equation}
\begin{split}
     \omega_{a,b}  & := \omega_a + \omega_{b}\\
     & = -ia\partial\overline{\partial}\log\bigg(\sum_{i = 0}^{d}|x_i|^2\bigg)-ib\partial\overline{\partial}\log\bigg(\sum_{i = -1}^{d}|y_i|^2\bigg)\\
     & = -ia \sum_{i,j=0}^d \left( \frac{\delta_{ij}}{\sum_{i=0}^d |x_i|^2} - \frac{x_i \bar{x}_j}{\left( \sum_{i=0}^d |x_i|^2 \right)^2} \right) dx_i \wedge d\bar{x}_j
+\\
& -ib\sum_{l,m=-1}^{d} \left( \frac{\delta_{lm}}{\sum_{j=-1}^{d} |y_j|^2} - \frac{y_l \bar{y}_m}{\left( \sum_{j=-1}^{d} |y_j|^2 \right)^2} \right) dy_l \wedge d\bar{y}_m.
\end{split}
\end{equation}
where $a,b \in \mathbb{Z}$. Note that these two parameters only have an impact on the symplectic structure, but not on the algebraic and complex geometry of the fibrations. Equipped with this symplectic form, the generalized Hirzebruch manifolds become symplectic manifolds, which we denote as $(\mathcal{H}^d_{a,b,n}, \omega_{a,b})$.
\\
\\
We now proceed to show that the $\mathbb{T}^{d+1}$-action defined above is Hamiltonian. Consider $\theta_i$ for $1\leq i \leq d$. The vector fields $\frac{\partial}{\partial \theta_i}$ generating the rotational action in the $\theta_i$-direction can be written in terms of complex projective coordinates: 
\begin{equation*}
\begin{split}
    \frac{\partial}{\partial \theta_i} &  = \frac{\partial x_i}{\partial \theta_i} \frac{\partial }{\partial x_i}+\frac{\partial \bar{x}_i}{\partial \theta_i} \frac{\partial }{\partial \bar{x}_i}+\frac{\partial y_i}{\partial \theta_i} \frac{\partial }{\partial y_i}+\frac{\partial \bar{y}_i}{\partial \theta_i} \frac{\partial }{\partial \bar{y}_i}\\
    & = i \bigg(x_i\frac{\partial }{\partial x_i}- \bar{x}_i\frac{\partial }{\partial \bar{x}_i}\bigg)+ i n \bigg(y_i\frac{\partial }{\partial y_i}- \bar{y}_i\frac{\partial }{\partial \bar{y}_i}\bigg).
\end{split}
\end{equation*}
For the last vector field, we obtain 
\begin{equation*}
    \frac{\partial}{\partial \theta_{d+1}}  = \sum_{j = 0}^{d}\bigg(\frac{\partial y_j}{\partial \theta_{d+1}} \frac{\partial }{\partial y_j}+\frac{\partial \bar{y}_j}{\partial \theta_{d+1}} \frac{\partial }{\partial \bar{y}_{j}}\bigg)
    = i \sum_{j = 0}^{d}\bigg(y_j\frac{\partial }{\partial y_j}- \bar{y}_j\frac{\partial }{\partial \bar{y}_j}\bigg).
\end{equation*}
Computing the insertion of these vector fields in the Kähler form, we obtain 
\begin{equation*}
\begin{split}
     \iota_{\frac{\partial}{\partial \theta_p}}\omega & = -\frac{1}{2} \sum_{i,j=0}^d \left( \frac{\delta_{ij}}{\sum_{i=0}^d |x_i|^2} - \frac{x_i \bar{x}_j}{\left( \sum_{i=0}^d |x_i|^2 \right)^2} \right) \left( x_p \delta_{ip} \, dx_p \wedge d\bar{x}_p - \bar{x}_p \delta_{jp} \, dx_i \wedge d\bar{x}_j \right)+\\
& - \frac{n}{2} \sum_{l,m=-1}^d \left( \frac{\delta_{lm}}{\sum_{j=-1}^d |y_j|^2} - \frac{y_l \bar{y}_m}{\left( \sum_{j=-1}^d |y_j|^2 \right)^2} \right) \left( y_p \delta_{lp} \, dy_p \wedge d\bar{y}_p -  \bar{y}_p \delta_{mp} \, dy_l \wedge d\bar{y}_m \right).
\end{split}
\end{equation*}
%(FINISH COMPUTATION, CHECd STUFF)
We obtain the following moment map $\mu: \: \C\mathbb{P}^d\times\C\mathbb{P}^{d+1}\to \text{Lie}(\mathbb{T}^{d+1})^* \cong \R^{d+1}$:
\begin{equation}
    \mu([x_0:x_1:...:x_d],[z_0:y_0:y_1...:y_d]) = 
    \begin{pmatrix}
        a\frac{|x_1|^2}{\sum_{i=0}^d |x_i|^2}+nb\frac{|y_1|^2}{|z_0|^2+\sum_{i=0}^d |y_i|^2}\\
        \vdots\\
        a\frac{|x_d|^2}{\sum_{i=0}^d |x_i|^2}+nb\frac{|y_d|^2}{|z_0|^2+\sum_{i=0}^d |y_i|^2}\\
        b\frac{\sum_{i=0}^d |y_i|^2}{|z_0|^2+\sum_{i=0}^d |y_i|^2}\\
    \end{pmatrix}
\end{equation}
Now that we have shown that the action of $\mathbb{T}^{d+1}$ is toric, we should restrict the moment map to $\mathcal{H}^d_n$, which in local coordinates where $x_0 = 1$, corresponds to setting $y_i = y_0 x^n_i$. 
\begin{equation}
    \mu([1:x_1:...:x_d],[z_0:y_0:y_0x^n_1...:y_0x^n_d]) = 
    \begin{pmatrix}
        a\frac{|x_1|^2}{\sum_{i=0}^d |x_i|^2}+nb\frac{|y_0|^2|x_1|^{2n}}{|z_0|^2+|y_0|^2(1+\sum_{i=0}^d |x_i|^{2n})}\\
        \vdots\\
        a\frac{|x_d|^2}{\sum_{i=0}^d |x_i|^2}+nb\frac{|y_0|^2|x_d|^{2n}}{|z_0|^2+|y_0|^2(1+\sum_{i=0}^d |x_i|^{2n})}\\
        b\frac{\sum_{i=0}^d |y_i|^2}{|z_0|^2+\sum_{i=0}^d |y_i|^2}\\
    \end{pmatrix}
\end{equation}
We are interested in understanding the combinatorics of the moment polytope $\mu(\mathcal{H}^d_{a,b,n})$ associated with the generalized Hirzebruch manifold.
First, we need to determine which fixed points of the ambient action are also fixed points of the action restricted to the generalized Hirzebruch manifold. 
\\
\\
Consider the collection of $d+1$ open sets 
\begin{equation*}
    U_i := \{[z_0:...:z_d]\in \C\mathbb{P}^d\:|\:z_i \neq 0\} \cong \C^d,
\end{equation*}
which form an atlas for $\C\mathbb{P}^d$. Given the form of the fixed points, as described above, we note that each $U_i$ contains $d+2$ of them, and $U_i \cap U_j$ does not contain a fixed point for $i\neq j$. Clearly, the collection of $V_i :=U_i \cap \mathcal{H}^d_n$, for $i=0,..,d$, forms an atlas for $\mathcal{H}^d_n$, and on each $V_i$ the equation $y_j = y_i x^n_j$ holds. A fixed point has $x_j = 0$ for $i\neq j$, therefore necessarily $y_j = 0$. We conclude that $V_i$ contains two fixed points, given by 
\begin{equation*}
     Z^{(0)}_i := ([0:...:\underbrace{1}_{i-\text{th}}:...:0],[1:...:0])
\end{equation*}
and 
\begin{equation*}
     Z^{(1)}_i := ([0:...:\underbrace{1}_{i-\text{th}}:...:0],[0:...:\underbrace{1}_{i-\text{th}}:...:0]\big).
\end{equation*}
Therefore, only $2d+2$ fixed points of the ambient action are also fixed points of the restricted action. The image of these points under the moment map is precisely the set of vertices of the moment polytope $\mu(\mathcal{H}^d_{a,b,n})$:
\begin{equation*}
    \mu(Z^{(0)}_0) = 0
\end{equation*}
\begin{equation*}
    \mu(Z^{(1)}_0) = be_{d+1}
\end{equation*}
For $ =1,...,d$ we obtain 
\begin{equation*}
    \mu(Z^{(0)}_i) = (a+nb)e_i
\end{equation*}
\begin{equation*}
    \mu(Z^{(1)}_i) = ae_i+be_{d+1}
\end{equation*}
The associated moment polytope to the fibration is obtained as the convex hull of these vertices: 
\begin{equation}
    \Delta^d_{a,b,n}:=\bigg\{(x_1,...,x_{d+1}) \in \R^{d+1}\:\bigg|\:x_i \geq0,\quad\sum_{i=i}^d x_i \leq a +n(b-x_{d+1}), \quad x_{d+1}\leq b\bigg\}
\end{equation}
\section{Properties of the quantization function}
We define the \textbf{quantization function} as $ \mathcal{Q}_{a,b,d}(n):=\mathcal{Q}(\mathcal{H}^d_{a,b,n},\omega_{a,b})$
The goal is to compute this function by counting integer lattice points for generalized Hirzebruch fibrations. To do so, we foliate the polytope in the direction normal to the basis vector $e_{d+1} \in \mathbb{R}^{d+1}$:
\begin{equation}
\label{foliation}
    \Delta^d_{a,b,n}(t) := \bigg\{(x_1,...,x_{d+1}) \in \R^{d+1}\:\bigg|\:x_i \geq0, \quad x_{d+1}= t, \quad\sum_{i=i}^d x_i \leq L_t := a +n(b-t)\bigg\}.
\end{equation}
Note that, for a fixed $t$, this is a standard $d$-simplex scaled by $L_t$. 
Clearly 
\begin{equation}
   \Delta^d_{a,b} = \bigsqcup_{t = 0}^{b}  \Delta^d_{a,b}(t).
\end{equation}
The number of $(l_1,...,l_{d})\in \mathbb{Z}^{d}_{\geq 0}$ satisfying the Diophantine inequality $\sum_{i=i}^d l_i \leq L_t$ is $\binom{L_t+d}{d}$, a result that can be obtained by the same method used in the first section to compute the quantization of complex projective space.
We obtain the quantization as a function of all the parameters of the generalized Hirzebruch fibration:
\begin{equation}
\begin{split}
    \mathcal{Q}_{a,b,d}(n) &:= \#(\Delta^d_{a,b} \cap \mathbb{Z}^{d+1})\\
    & = \sum_{t = 0}^{b} \#(\Delta^d_{a,b}(t)\cap \mathbb{Z}^{d+1})\\
    & = \sum_{t = 0}^{b} \binom{L_t+d}{d}\\
    & = \sum_{t = 0}^{b} \binom{a+n(b-t)+d}{d}\\
    & = \sum_{i = 0}^{b} \binom{a+d+ni}{d}.\\
    & = \mathcal{Q}(\C\mathbb{P}^d,\omega_a)+\sum_{i = 1}^{b} \binom{a+d+ni}{d}.
    %& = \binom{a+d}{d} \sum_{i = 0}^{b} \frac{(a+d+1)_{ni}}{(a+1)_{ni}}\\,
    %& = \sum_{i = 0}^{b} \sum_{k = 0}^{d+1}C_k(a,b,n)i^k
\end{split}
\end{equation}
%where we used the Pochhammer symbol $(x)_n := \frac{\Gamma(x+1)}{\Gamma(x-n+1)}$.
The first contribution always comes from the quantization of $\C\mathbb{P}^d$, that is, the base of the bundle. The sum contains the contributions from the fibers of the bundle.
In general, this sum does not admit a solution in closed form. However, in the last section, we will derive an exact recursion relation for $\mathcal{Q}_{a,b,d}(n)$. For now, let us consider some special cases for which the sum can be evaluated explicitly. 
\\
\\
For $d = 1$, $\mathcal{H}^1_{a,b,n}$ are the usual Hirzebruch surfaces.. We obtain
\begin{equation}
\label{Hirzebruch}
    \mathcal{Q}_{a,b,1}(n) = \Big(a+1+\frac{nb}{2}\Big)(b+1).
\end{equation}
For a general complex dimension, the only cases for which the quantization function can be evaluated explicitly are the first two fibrations, corresponding to $n=0$ and $n=1$. We show that this is related to the fact that for these fibrations the geometry is particularly simple and can be related to the geometry of complex projective spaces.
\\
\\
In the first case, we have $\mathcal{H}^d_{a,b,0} \cong \C\mathbb{P}^d \times \C\mathbb{P}^1$, which can be seen by the fact that the vector bundle $\mathcal{O}\oplus\mathcal{O}$ is trivial of rank 2, therefore it must be isomorphic to $\C\mathbb{P}^d \times \C$. After taking the projectivization, we obtain the isomorphism. Therefore, the quantization factorizes:
\begin{equation} 
    \mathcal{Q}_{a,b,d}(0) =  \binom{a+d}{d}(b+1) = \mathcal{Q}(\C\mathbb{P}^d)\mathcal{Q}(\C\mathbb{P}^1).
\end{equation}
The $n=1$ case is well-known when $d=1$: ignoring momentarily the parameters $a,b$, which are related to the symplectic structure, $\mathcal{H}^1_{1}$ is the blow-up of $\C\mathbb{P}^2$ at a point. We show a similar statement for a general value of $d$:
\\
\begin{proposition}
    \textit{The generalized Hirzebruch fibration corresponding to $n=1$ is isomorphic to the blow-up of $\C\mathbb{P}^{d+1}$ at a copy of $\C\mathbb{P}^{d-1}$:}
    \begin{equation}
        \text{Proj}(\mathcal{O}\oplus \mathcal{O}(-1)) \cong \text{Bl}_{\C\mathbb{P}^{d-1}}(\C\mathbb{P}^{d+1})
    \end{equation}
\end{proposition}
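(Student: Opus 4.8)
The plan is to produce the isomorphism by exhibiting an explicit birational morphism and recognising it as a blow-down. I would work directly with the presentation $\mathcal{H}^d_1 = Z(y_0 x_i - y_i x_0 \mid i = 1,\dots,d) \subset \C\mathbb{P}^d \times \C\mathbb{P}^{d+1}$ obtained above, where the target $\C\mathbb{P}^{d+1}$ carries coordinates $[z_0 : y_0 : \cdots : y_d]$, and consider the restriction $\rho$ to $\mathcal{H}^d_1$ of the projection onto the second factor, $\rho([x],[z_0:y_0:\cdots:y_d]) = [z_0:y_0:\cdots:y_d]$. The strategy is to show that $\rho$ is a surjective birational morphism that is an isomorphism away from a distinguished locus, to identify the locus it contracts together with its image (the center $Z$), and then to conclude via the universal property of the blow-up that $\mathcal{H}^d_1$ is the blow-up of $\C\mathbb{P}^{d+1}$ along $Z$.

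In order, the steps are: (i) on the open set where $(y_0,\dots,y_d) \neq 0$, use the relations $y_i x_j = y_j x_i$ to show $[x] = [y_0 : \cdots : y_d]$, so that $\rho$ admits the regular inverse $[z_0 : y_0 : \cdots : y_d] \mapsto ([y_0 : \cdots : y_d],[z_0 : y_0 : \cdots : y_d])$ there, whence $\rho$ is birational; (ii) analyse the fibre of $\rho$ over the complementary locus $\{y_0 = \cdots = y_d = 0\}$, on which the defining equations become vacuous and the $\C\mathbb{P}^d$ of base points $[x]$ is unconstrained; (iii) identify the contracted set and its image, i.e. the center $Z \subset \C\mathbb{P}^{d+1}$, and verify that the fibre of $\rho$ over $Z$ is the projectivised normal bundle $\mathbb{P}(N_{Z/\C\mathbb{P}^{d+1}})$; (iv) check that $\rho^{-1}(\mathcal{I}_Z)$ is an invertible ideal sheaf and invoke the universal property of blow-ups to upgrade the birational morphism to the claimed isomorphism.

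The decisive step, and the one I expect to be the real obstacle, is (iii): pinning down $Z$ as a copy of $\C\mathbb{P}^{d-1}$. Carrying the construction out honestly, the locus $\{y = 0\}$ in the target is the single point $p = [1:0:\cdots:0]$, and the entire contracted set $\{([x],p) : [x] \in \C\mathbb{P}^d\} \cong \C\mathbb{P}^d$ collapses to $p$; thus the center produced by this natural morphism is $\C\mathbb{P}^0$, which agrees with the stated $\C\mathbb{P}^{d-1}$ only when $d=1$. For $d \geq 2$ this is not mere bookkeeping: the blow-up of $\C\mathbb{P}^{d+1}$ along a linear $\C\mathbb{P}^{d-1}$ is a $\C\mathbb{P}^d$-bundle over $\C\mathbb{P}^1$ (via projection from the center onto the complementary line) and hence carries a degree-two class $h$ pulled back from that $\C\mathbb{P}^1$ with $h^2 = 0$, whereas the ring $H^*(\mathcal{H}^d_1) \cong \mathbb{Z}[\alpha,\xi]/\langle \alpha^{d+1}, \xi^2 + \alpha\xi\rangle$ computed above admits no nonzero degree-two class of square zero. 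So the isomorphism as worded can hold only for $d=1$, and I would expect the correct general statement to take the center to be a single point, with $\C\mathbb{P}^0 = \C\mathbb{P}^{d-1}$ recovering the classical case $\mathcal{H}^1_1 = \mathrm{Bl}_{\mathrm{pt}}(\C\mathbb{P}^2)$. Resolving this discrepancy — either by repairing the center to a point, or by locating a genuinely different morphism that realises a true $\C\mathbb{P}^{d-1}$ center — is where the main work lies.
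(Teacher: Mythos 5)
Your analysis is correct, and the discrepancy you isolate in step (iii) is a genuine error in the paper, not a defect of your attempt: the proposition as stated holds only for $d=1$, and your repaired statement --- that the natural contraction exhibits $\mathcal{H}^d_1$ as the blow-up of $\C\mathbb{P}^{d+1}$ at a \emph{point} --- is the correct one. Your cohomological obstruction is sound: in $\mathbb{Z}[\alpha,\xi]/\langle \alpha^{d+1},\,\xi^2+\alpha\xi\rangle$ a degree-two class $u = p\alpha + q\xi$ satisfies $u^2 = p^2\alpha^2 + (2pq - q^2)\alpha\xi$, which for $d\geq 2$ vanishes only when $p=q=0$, whereas $\mathrm{Bl}_{\C\mathbb{P}^{d-1}}(\C\mathbb{P}^{d+1})$, being a $\C\mathbb{P}^d$-bundle over $\C\mathbb{P}^1$, carries the pullback $h$ of the point class with $h^2=0$. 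It is worth noting that the ring structure is genuinely needed here: both manifolds have Poincar\'e polynomial $1+2t^2+\cdots+2t^{2d}+t^{2d+2}$ and both have $2d+2$ torus-fixed points, so no additive or fixed-point count would detect the failure. The paper's own proof constructs $\Psi([x],[z_0:y_0:\cdots:y_d]) = ([y_0:\cdots:y_d:z_0],[x_0:x_1])$ into the incidence variety $X = \{z_0w_1 = z_1w_0\}$, and it breaks exactly where your obstruction predicts: for $d\geq 2$ the second component is undefined on the nonempty locus $x_0=x_1=0$, and along the section $\{y=0\}$ the map forgets $[x_2:\cdots:x_d]$, collapsing a $\C\mathbb{P}^d$ to a curve, so $\Psi$ is neither everywhere defined nor injective; correspondingly, the instruction ``choose any point in $\C\mathbb{P}^d$'' in the surjectivity step does not define an inverse over the exceptional divisor.

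One caveat on your own steps (i)--(ii): the presentation $\mathcal{H}^d_1 = Z(y_0x_i - y_ix_0 \mid i=1,\dots,d)$ quoted from the paper is itself unreliable for $d\geq 2$, since the zero locus acquires the spurious components $\{x_0 = y_0 = 0\} \cong \C\mathbb{P}^{d-1}\times\C\mathbb{P}^d$ of dimension $2d-1 \geq d+1$; you should work either with the full set of $2\times 2$ minors $y_ix_j - y_jx_i$ ($0\leq i<j\leq d$) or directly with $\mathrm{im}(\phi)$. With that correction your argument closes cleanly: wherever $y\neq 0$ one gets $[x] = [y_0:\cdots:y_d]$ and $\rho$ is an isomorphism; $\rho$ contracts precisely the section $\{z_1=0\}\cong\C\mathbb{P}^d$ to $p=[1:0:\cdots:0]$; the fiber over $p$ is $\mathbb{P}(N_{p/\C\mathbb{P}^{d+1}})\cong\C\mathbb{P}^d$; and the universal property yields $\mathcal{H}^d_1 \cong \mathrm{Bl}_p(\C\mathbb{P}^{d+1})$ (the classical fact $\mathrm{Bl}_p(\C\mathbb{P}^{d+1}) \cong \mathbb{P}(\mathcal{O}\oplus\mathcal{O}(-1))$ over $\C\mathbb{P}^d$). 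Your corrected statement is also corroborated by the paper's own toric data, which survives this error: the moment polytope $\Delta^d_{a,b,1}$ is the standard $(a+b)$-simplex in $\R^{d+1}$ truncated by $x_{d+1}\leq b$, i.e.\ with the single corner at $(a+b)e_{d+1}$ chopped off --- the toric signature of an equivariant blow-up at a fixed point, not along a $\C\mathbb{P}^{d-1}$ --- so the lattice-point identity that the proposition was meant to explain is in fact explained by the point blow-up.
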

\begin{proof}
    Let $[z_0:...:z_{d+1}]$ be homogeneous coordinates on $\C\mathbb{P}^{d+1}$ and consider the subspace $P:=\{[0:0:x_2:...:x_{d+1}]\} \cong \C\mathbb{P}^{d-1}$. The blow-up $X:= \text{Bl}_P(\C\mathbb{P}^{d+1})$ can be described as the following subvariety of $\C\mathbb{P}^{d+1} \times \C\mathbb{P}^1$:
    \begin{equation*}
    X =\{([z_0:z_1:...:z_{d+1}],[w_0:w_1]) \in \C\mathbb{P}^{d+1} \times \C\mathbb{P}^1\:|\:z_0w_1 = z_1w_0\}
    \end{equation*}
    There is a natural projection 
    \begin{equation*}
         \begin{split}
  \pi  :\:   X & \to \C\mathbb{P}^{d+1}\\
   ([z_0:z_1:...:z_{d+1}],[w_0:w_1]) & \mapsto  [z_0:z_1:...:z_{d+1}]
\end{split}
\end{equation*}
and the exceptional divisor is $E:=\pi^{-1}(P) = P \times \C\mathbb{P}^1$. It holds $X\setminus E \cong \C\mathbb{P}^{d+1}\setminus P \cong \C\mathbb{P}^{d-1}$, 
where the first isomorphism is given by $\pi|_E$.
\\
\\
The first generalized Hirzebruch fibration can be described as the following vanishing locus:
\begin{equation*}
    \mathcal{H}^d_1= Z(y_0 x_i - y_i x_0\:|\: i \in \{1,...,d\})\in \C\mathbb{P}^{d}\times \C\mathbb{P}^{d+1}
\end{equation*}
We construct a map between  $\mathcal{H}^d_1$ and $X$ and show that it is an isomorphism.
\begin{equation*}
\begin{split}
  \Psi  :\:  \mathcal{H}^d_1  & \to X\\
  ([x_0:...:x_d],[z_0:y_0:...:y_d]) & \mapsto  ([y_0:y_1:...:y_{d}:z_0],[x_0:x_1])
\end{split}
\end{equation*}
We will show that $\Psi$ is well-defined, holomorphic, injective, and surjective, so that it is a biholomorphism.

\textbf{(i) Well-definedness and holomorphicity.}  
A point in $\mathcal{H}^d_1$ is represented by a pair
\[
\bigl( [x_0:x_1:\cdots:x_d],\, [z_0:y_0:y_1:\cdots:y_d] \bigr)
\]
where the second factor represents the fiber of the projectivization of $\mathcal{O}\oplus \mathcal{O}(-1)$.
The map 
\[
\Psi\Bigl( [x],[z:y] \Bigr) = \Bigl( [y_0:y_1:\cdots:y_d:z_0],\, [x_0:x_1] \Bigr)
\]
is defined by homogeneous polynomials in the coordinates. Hence it is independent of the chosen representatives and is holomorphic.
\\
\\
By the definition of $X$, a point $([z_0:z_1:\cdots:z_{d+1}], [w_0:w_1]) \in \mathbb{C}\mathbb{P}^{d+1}\times\mathbb{C}\mathbb{P}^1$ lies in $X$ if and only if 
\[
z_0\,w_1 = z_1\,w_0.
\]
For a point $P \in \mathcal{H}^d_1$, its image under $\Psi$ is
\[
\Psi(P)=\Bigl( [y_0:y_1:\cdots:y_d:z_0],\, [x_0:x_1] \Bigr).
\]
By the defining equations of $\mathcal{H}^d_1$, the coordinates satisfy
\[
y_0x_i - y_i x_0 = 0 \quad \text{for } i=1,\dots,d.
\]
In particular, for $i=1$ we have $y_0 x_1 = y_1 x_0$. Hence, writing $[w_0:w_1] = [x_0:x_1]$, we see that
\[
z_0\,w_1 = z_0\,x_1 = z_0\,x_1 \quad \text{and} \quad z_1\,w_0 = z_1\,x_0,
\]
so there is no further relation required on the $z$-coordinates. (In other words, the incidence condition in $X$ is automatically satisfied by our construction, since the role of the fiber is being encoded via the $[x_0:x_1]$ coordinates.) Thus, $\Psi(P) \in X$.

\textbf{(ii) Injectivity.}  
Suppose
\[
\Psi\Bigl( [x_0:x_1:\cdots:x_d],\, [z_0:y_0:y_1:\cdots:y_d] \Bigr)
=
\Psi\Bigl( [x'_0:x'_1:\cdots:x'_d],\, [z'_0:y'_0:y'_1:\cdots:y'_d] \Bigr).
\]
Then, by definition, we have:
\[
[y_0:y_1:\cdots:y_d:z_0] = [y'_0:y'_1:\cdots:y'_d:z'_0] \quad \text{in } \mathbb{C}\mathbb{P}^{d+1},
\]
and
\[
[x_0:x_1] = [x'_0:x'_1] \quad \text{in } \mathbb{C}\mathbb{P}^{1}.
\]
The equality in $\mathbb{C}\mathbb{P}^{1}$ implies there exists $\lambda\in \mathbb{C}^*$ such that
\[
x'_0 = \lambda x_0 \quad \text{and} \quad x'_1 = \lambda x_1.
\]
Similarly, the equality in $\mathbb{C}\mathbb{P}^{d+1}$ implies there exists $\mu\in \mathbb{C}^*$ such that
\[
y'_j = \mu y_j \quad \text{for } j=0,\dots,d,\quad \text{and} \quad z'_0 = \mu z_0.
\]
Thus, the two points in $\mathcal{H}^d_1$ are equivalent. Hence, $\Psi$ is injective.

\textbf{(iii) Surjectivity.}  
Let
\[
([z_0:z_1:\cdots:z_{d+1}], [w_0:w_1]) \in X.
\]
By the definition of $X$, we have $z_0w_1=z_1w_0$. Define
\[
[x_0:x_1] := [w_0:w_1] \quad \text{and} \quad [x_0:x_1:\cdots:x_d]
\]
by choosing any point in $\mathbb{C}\mathbb{P}^d$ (for instance, we may use the fact that the data of $[z_0:z_1:\cdots:z_{d+1}]$ naturally decomposes if we write it as $[y_0:y_1:\cdots:y_d:z_0]$ with $z_1=z_1, \dots, z_d=z_d$). More precisely, set
\[
[y_0:y_1:\cdots:y_d] := [z_1:z_2:\cdots:z_d:z_{d+1}],
\]
then the preimage under $\Psi$ is given by
\[
\bigl( [x_0:x_1:\cdots:x_d],\, [z_0:y_0:y_1:\cdots:y_d] \bigr).
\]
A straightforward verification shows that this assignment (defined on appropriate charts) is the inverse of $\Psi$. Thus, $\Psi$ is surjective.

Since $\Psi$ is holomorphic, injective, and an immersion, and since $\mathcal{H}^d_1$ is compact and $X$ is Hausdorff, it follows that $\Psi$ is a biholomorphism onto its image. Therefore, $\Psi$ is an isomorphism of complex manifolds.
\end{proof}
The quantization for $\mathcal{H}^d_{a,b,1}$ can be evaluated explicitly:
\begin{equation}
    \mathcal{Q}_{a,b,d}(1) =\sum_{i = 0}^{b} \binom{a+d+i}{d} = \binom{a+b+d+1}{d+1}-\binom{a+d}{d+1}\\.
\end{equation}
By the proposition, we know that $\mathcal{H}^d_{a,b,1}$ is a blow-up. At the level of the moment polytope, performing a blow-up corresponds to chopping off the polytope. Since for symplectic toric manifolds, the geometric quantization counts the number of integer lattice points in the moment polytope, the quantization of the blow-up is the quantization of the original space minus the part removed. In fact, it holds the following:
\begin{equation}
\begin{split}
    \mathcal{Q}_{a,b,d}(1)&  = \binom{a+b+d+1}{d+1}-\binom{a+d}{d+1}\\
    & = \binom{a+b+d+1}{d+1}-\binom{a+d+1}{d+1}+\binom{a+d-1}{d-1}\\
    & = \mathcal{Q}(\C\mathbb{P}^{d+1},\omega_{a+b})-\mathcal{Q}(\C\mathbb{P}^{d+1},\omega_{a})+\mathcal{Q}(\C\mathbb{P}^{d-1},\omega_{a}).
\end{split}
\end{equation}
The last term accounts for the integer points in the divisor. The explicit evaluation is therefore related to the fact that the blow-up of complex projective space can be described explicitly in terms of the moment polytope.
\\
\\
For $n\geq 2$, there is not an operation that relates $\C\mathbb{P}^d$ to the generalized Hirzebruch fibrations. These manifolds are solely understood as $\C\mathbb{P}^1$-bundles over $\C\mathbb{P}^d$. As a consequence, the quantization function does not admit a closed form. However, we can still investigate some of its combinatorial and asymptotic proprieties. 
\subsection{Recurrence relation}
We prove a recursion relation that in principle, if the first several values of $\mathcal{Q}_{a,b,d}(n)$ are known, allows the computation of all the next values. 
\\
%\begin{proposition}
   %\textit{ The following expression for the counting function holds:}
%\begin{equation*}
    %F_{a,b,d}(n) = \frac{1}{n}\bigg[\binom{a+d+nb+1}{d+1}+\sum_{k=1}^{n-1}\sum_{j=0}^{nb}\omega^{-kj}\binom{a+d+j}{d}\bigg],
%\end{equation*}
%\textit{where $\omega := e^{2\pi i/n}$.}
%\end{proposition}
%\begin{proof}
%\end{proof}
\begin{proposition}
    \textit{The quantization function satisfies the following recurrence relation for all $n\in \mathbb{N}$:}
    \begin{equation}
       \sum_{k = 0}^{d+1}(-1)^k\binom{d+1}{k} \mathcal{Q}_{a,b,d}(n+d+1-k) = 0.
    \end{equation}
\end{proposition}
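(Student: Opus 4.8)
The plan is to recognize the left-hand side of the recurrence as the $(d+1)$-th iterated forward finite difference of $\mathcal{Q}_{a,b,d}$, viewed as a function of the twisting parameter $n$, and then to show that this function is a polynomial in $n$ of degree at most $d$ — which is exactly the condition guaranteeing that such a difference vanishes.

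First I would establish polynomiality. Using the explicit formula derived above,
\[
\mathcal{Q}_{a,b,d}(n) = \sum_{i=0}^{b}\binom{a+d+ni}{d},
\]
I would observe that for each fixed $i$ the summand expands as
\[
\binom{a+d+ni}{d} = \frac{1}{d!}\prod_{j=1}^{d}(ni + a + j),
\]
a product of $d$ factors each linear in $n$, hence a polynomial in $n$ of degree at most $d$ (of degree exactly $d$ when $i\geq 1$, with leading coefficient $i^d/d!$). Since the outer sum runs over the fixed finite range $i=0,\dots,b$, whose bounds carry no $n$-dependence, $\mathcal{Q}_{a,b,d}(n)$ is a finite sum of such polynomials and is therefore itself a polynomial in $n$ of degree at most $d$. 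Its leading coefficient is $\tfrac{1}{d!}\sum_{i=0}^{b} i^d$, so the degree is in fact exactly $d$ for $b\geq 1$, though only the bound $\leq d$ is needed below.

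Second, I would identify the alternating sum with a finite difference. Writing $f(n):=\mathcal{Q}_{a,b,d}(n)$ and reindexing the left-hand side by $j=d+1-k$, it becomes $(-1)^{d+1}\sum_{j=0}^{d+1}(-1)^{j}\binom{d+1}{j}f(n+j)$, which is precisely the $(d+1)$-th forward difference $\Delta^{d+1}f(n)$, where $\Delta g(n):=g(n+1)-g(n)$. Finally I would invoke the standard fact that each application of $\Delta$ lowers the degree of a polynomial by one, so $\Delta^{d+1}$ annihilates every polynomial of degree at most $d$; combined with the first step this yields $\Delta^{d+1}f(n)=0$, which is the asserted recurrence.

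The main obstacle is really the only conceptual point, namely the first step: it may look paradoxical that $\mathcal{Q}_{a,b,d}(n)$ is a genuine polynomial in $n$ even though it admits no simple closed form for $n\geq 2$. The resolution is that the phrase \emph{no closed form} refers only to the absence of a compact product-of-binomials expression; the power-sum coefficients $\sum_{i=0}^{b} i^{k}$ are perfectly explicit but not tidy. Once polynomiality of degree at most $d$ is secured, the finite-difference identity and its vanishing are routine bookkeeping.
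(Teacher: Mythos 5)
Your proposal is correct and follows essentially the same route as the paper: establish that $\mathcal{Q}_{a,b,d}(n)$ is a polynomial of degree $d$ in $n$, then observe that the alternating binomial sum is the iterated forward difference $\Delta^{d+1}$, which annihilates all polynomials of degree at most $d$. Your polynomiality argument (each summand $\binom{a+d+ni}{d}$ as a product of $d$ linear factors in $n$, summed over a fixed range) is in fact slightly cleaner and more explicit than the paper's expansion via unspecified coefficients $A_{k,d}$, and your explicit reindexing of the alternating sum fills in a step the paper leaves as ``expanding $\Delta^{d+1}$.''
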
   
\begin{proof}
    Expanding the binomial coefficient, we can write 
    \begin{equation*}
    \begin{split}
        \mathcal{Q}_{a,b,d}(n) & = \frac{1}{d!} \sum_{k = 0}^{d}A_{k,d}\sum_{i = 0}^{b} (a+d+ni)^k\\ & = \frac{1}{d!} \sum_{k = 0}^{d}A_{k,d}\sum_{i = 0}^{b} \sum_{m = 0}^{k}\binom{k}{m}(a+d)^{k-m}n^mi^m\\
        & = \frac{1}{d!} \sum_{k = 0}^{d}A_{k,d} \sum_{m = 0}^{k}\binom{k}{m}(a+d)^{k-m}n^m \sum_{i = 0}^{b} i^m,
    \end{split}
    \end{equation*}
    for some coefficients $A_{k,d}$. We see that $\mathcal{Q}_{a,b,d}(n)$ is a polynomial of degree $d$ in $n$. The finite difference operator is defined as the map 
    \begin{equation*}
         \begin{split}
  \Delta  :\:   \text{Poly}(n) & \to \text{Poly}(n)\\
   P(n) & \mapsto  \Delta[P](n) := P(n+1)-P(n).
\end{split}
    \end{equation*}
    This map can be iterated to obtain a map $ \Delta^k := \Delta\circ...\circ\Delta$.
    It is a general fact that any polynomial $P$ of degree $d$ in $n$ satisfies the equation $\Delta^{d+1}[P](n) = 0$. This is because every application of $\Delta$ reduces the degree of $P$ by one, so $\deg(\Delta^{d+1}[P]) = -1$, forcing it to vanish. By expanding $\Delta^{d+1}$ and applying it to $\mathcal{Q}_{a,b,d}(n)$, we obtain the result.
\end{proof}

\subsection{Large twisting and symplectic volume}
We can extract the leading order behavior of the quantization function as the twisting parameter $n$ defining the fibration becomes large. In this limit, which corresponds to the first Chern class (i.e. the curvature of the bundle) of the fibration becoming large and negative, we show that the quantization function becomes the symplectic volume at leading order and receives corrections in $1/b$. 
\\
\\
As a consequence of the Duistermaat-Heckman theorem for symplectic toric manifolds, the symplectic volume of the manifold is equal to the Euclidean volume of its moment polytope \cite{Duistermatt}. In our case, this means $\text{Vol}(\mathcal{H}^d_{a,b,n}) = \text{Vol}(\Delta^d_{a,b,n})$. To compute the right-hand side, we use the foliation \eqref{foliation} in standard $d$-simplices scaled by $a+n(b-t)$. The volume of a slice is 
\begin{equation}
    \text{Vol}(\Delta^d_{a,b,n}(t)) = \frac{1}{d!}(a+n(b-t))^d.
\end{equation}
To obtain the volume of the polytope, we simply integrate in $t$:
\begin{equation}
    \text{Vol}(\mathcal{H}^d_{a,b,n}) = \text{Vol}(\Delta^d_{a,b,n}) = \int_{0}^{b}\text{Vol}(\Delta^d_{a,b,n}(t))\:dt = \frac{1}{(d+1)!n}\Big((a+nb)^{d+1}-a^{d+1}\Big).
\end{equation}
As for the quantization function, the symplectic volume is a polynomial of degree $d$ in $d$. This fact motivates the following proposition:
\\
\begin{proposition}
  \textit{In the large-twisting limit, the ratio between quantization function and symplectic volume satisfies}
  \begin{equation}
      \lim_{n\to \infty}\frac{\mathcal{Q}_{a,b,d}(n)}{\text{Vol}(\mathcal{H}^d_{a,b,n})} = \sum_{k= 0}^{d}\binom{d+1}{k}B_k b^{-k},
  \end{equation}
  \textit{where $B_k$ are the Bernoulli numbers. In particular, $\mathcal{Q}_{a,b,d}(n) \sim \text{Vol}(\mathcal{H}^d_{a,b,n})$ up to a constant that is a polynomial of degree $d+1$ in $1/b$ and independent of $a$.}
  \\
\end{proposition}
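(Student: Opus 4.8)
The plan is to reduce the statement to a comparison of leading coefficients in $n$. By the expansion carried out before the recurrence-relation proposition, $\mathcal{Q}_{a,b,d}(n)$ is a polynomial of degree $d$ in $n$ (for fixed $a,b,d$), and the closed form for the volume shows the same is true of $\text{Vol}(\mathcal{H}^d_{a,b,n})$, with nonzero leading coefficient as soon as $b\ge 1$. Since the quotient of two polynomials of equal degree tends to the quotient of their leading coefficients, the whole problem comes down to identifying those two coefficients (throughout I take $d\ge 1$, the relevant range, so that the base $\C\mathbb{P}^d$ is positive-dimensional).

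For the volume, expanding $(a+nb)^{d+1}=\sum_{j=0}^{d+1}\binom{d+1}{j}a^{d+1-j}(nb)^{j}$ shows that only the top term $n^{d+1}b^{d+1}$ survives division by $(d+1)!\,n$ at order $n^{d}$, so the leading coefficient equals $b^{d+1}/(d+1)!$. For the quantization function I would write each summand as $\binom{a+d+ni}{d}=\tfrac{1}{d!}(a+d+ni)(a+d-1+ni)\cdots(a+1+ni)$, whose leading term in $n$ is $i^{d}n^{d}/d!$; the $i=0$ summand is constant in $n$ and therefore irrelevant once $d\ge 1$. Summing over $i$ gives the leading coefficient $\tfrac{1}{d!}\sum_{i=1}^{b}i^{d}$, and dividing by the volume coefficient yields
\[
\lim_{n\to\infty}\frac{\mathcal{Q}_{a,b,d}(n)}{\text{Vol}(\mathcal{H}^d_{a,b,n})}
=\frac{(d+1)!}{d!\,b^{d+1}}\sum_{i=1}^{b}i^{d}
=\frac{d+1}{b^{d+1}}\sum_{i=1}^{b}i^{d}.
\]

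The remaining step is purely combinatorial. I would invoke Faulhaber's formula $\sum_{i=1}^{b}i^{d}=\tfrac{1}{d+1}\sum_{k=0}^{d}\binom{d+1}{k}B_{k}\,b^{d+1-k}$ and substitute it into the line above: the factor $d+1$ cancels, and multiplying by $b^{-(d+1)}$ turns each power $b^{d+1-k}$ into $b^{-k}$, producing exactly $\sum_{k=0}^{d}\binom{d+1}{k}B_{k}\,b^{-k}$. Because $a$ has already disappeared at the leading-coefficient stage, the limit is manifestly independent of $a$ and is a polynomial in $1/b$, which is the content of the concluding ``in particular'' clause.

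The only genuinely delicate point is the Bernoulli convention. Faulhaber's identity for a sum running up to and including $b$ requires $B_{1}=+\tfrac12$; the opposite convention $B_{1}=-\tfrac12$ would flip the sign of the $b^{-1}$ term and break the match with the stated right-hand side, so I would fix this convention explicitly. As a consistency check, for $d=1$ the formula gives $1+\binom{2}{1}B_{1}b^{-1}=1+b^{-1}$, which agrees with dividing the leading terms of $\mathcal{Q}_{a,b,1}(n)=(a+1+\tfrac{nb}{2})(b+1)$ and $\text{Vol}=\tfrac{1}{2n}\big((a+nb)^2-a^2\big)$, namely $\tfrac{b(b+1)/2}{b^2/2}=1+b^{-1}$.
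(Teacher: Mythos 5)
Your proof is correct and follows essentially the same route as the paper's: compare the leading coefficients in $n$ of $\mathcal{Q}_{a,b,d}(n)$ (where the $i=0$ term and all $a$-dependence drop out) and of $\text{Vol}(\mathcal{H}^d_{a,b,n})$, then apply Faulhaber's formula $\sum_{i=1}^{b} i^d = \tfrac{1}{d+1}\sum_{k=0}^{d}\binom{d+1}{k}B_k b^{d+1-k}$. Your flag on the Bernoulli convention is not just pedantry but catches a real inconsistency: as your $d=1$ check (limit $=1+1/b$) confirms, the quoted Faulhaber identity requires $B_1=+\tfrac12$, whereas the paper's subsequent expansion displays a term $-\tfrac{d+1}{2b}$ and asserts $B_1=-\tfrac12$, which contradicts the very identity used in its proof.
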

\begin{proof}
The leading order term of the binomial coefficient is 
\begin{equation*}
    \binom{a+d+ni}{d}=\frac{1}{d!}\prod_{j=1}^d(a+j+ni) =  \frac{n^d}{d!}+O(n^{d-1}).
\end{equation*}
Therefore 
\begin{equation*}
     \mathcal{Q}_{a,b,d}(n) = \frac{n^d}{d!}\sum_{i = 1}^{b} i^d+O(n^{d-1})
\end{equation*}
The sum is well-known (see \cite{Bernoulli}) and given by
\begin{equation*}
    \sum_{i = 1}^{b}  i^d = \frac{1}{d+1}\sum_{k= 0}^{d}\binom{d+1}{k}B_k b^{d+1-k}.
\end{equation*}
For the volume, it holds 
\begin{equation*}
    \text{Vol}(\mathcal{H}^d_{a,b,n}) = \frac{1}{(d+1)!}n^db^{d+1}+O(n^{d-1}).
\end{equation*}
The result follows.
\end{proof}
The independence of the expression on the value of $a$ in the limit is because $a$ is related to the symplectic form on the base, while $b$ is related to the symplectic form on the fiber. Since the twist only concerns the fibers of the bundle and not the base, the contribution from the quantization of the base becomes negligible as $n$ becomes large. The first few terms in the asymptotic expansion are
\begin{equation}
\begin{split}
    \mathcal{Q}_{a,b,d}(n) \sim \text{Vol}(\mathcal{H}^d_{a,b,n})\bigg(&1-\frac{d+1}{2b}+\frac{(d+1)d}{12b^2}-\frac{(d+1)d(d-1)(d-2)}{720b^4}+\\
    &+\frac{(d+1)d(d-1)(d-2)(d-3)(d-4)}{30240b^6}+\\
    &-\frac{(d+1)d(d-1)(d-2)(d-3)(d-4)(d-5)(d-6)}{1209600b^8}+...\bigg).
\end{split}
\end{equation}
From this expression, it is noted that the subleading terms decay very quickly as the complex dimension increases. The contributions have alternating signs and are proportional to even powers of $\frac{1}{b}$, except for the linear term (since $B_1 = -\frac{1}{2}$ and $B_{2k+1} = 0$ for $k \geq 1$).
\\
Since the quantization counts the number of integer lattice points in the moment polytope and the symplectic volume is equal to the Euclidean volume of the moment polytope, the ratio has the geometric interpretation of the density of integer points. Physically, it shows that the quantization function receives its leading contribution from the volume of classical phase space, and subleading quantum corrections that decay polynomially as the volume of the fiber becomes large.

%\begin{definition}
 %  \textit{ Let $E$ be a holomorphic vector bundle on a compact complex manifold $X$. The \textbf{holomorphic Euler characteristic} of $E$ is }
%    \begin{equation*}
   %     \chi(X,E) := \sum_{i = 0}^{r}\dim_{\C}H^i(X,E),
 %   \end{equation*}
  %  \textit{where $r := \dim_{\C}X$.}
%\end{definition}

%\begin{theorem}[Hirzebruch-Riemann-Roch]
   %\textit{ The following equality of the holomorphic Euler characteristic holds: }
   % \begin{equation}
        % \chi(X,E) = \int_{X} \text{ch}(E)\text{td}(X),
 %   \end{equation}
 % \textit{  where} $\text{td}(X)$ \textit{is the Todd class of the tangent bundle of $X$.}
%\end{theorem}
%As was proven in $\cite{Danilov}$, for $X$ a toric variety, it holds $H^j(X, \mathcal{O}_{\mathbb{L}}) = 0$ for $j>0$. Therefore, the holomorphic Euler characteristic of $X$ reduces to computing the dimension the zeroth sheaf cohomology group, that is, the counting function. 
%By using $\text{ch}(L^n) = e^{nc_1(L)}= \sum\frac{n^k}{k!}c_1(L)^n$ and the fact that the manifold $X$ has complex dimension $r$, the integral picks out the top $2r$-form. Since $\text{td}(X)$ has constant term 1 and contributes lower order terms in $k$, the dominant term is $\frac{n^r}{r!}c_1(L)^r$. We obtain 
%\begin{equation*}
 %   \chi(X,L^n) \sim  \frac{n^r}{r!} \int_{X} c_1(L)^r,
%\end{equation*}
%which is proportional to the symplectic volume, as $c_1(L)$ represents the class of the symplectic form $\omega$. 

\section*{Acknowledgements}
\addcontentsline{toc}{section}{\protect\numberline{}Acdnowledgement}
This paper evolved from a semester thesis I wrote at ETH Zurich during the Fall semester of 2024 under the supervision of Prof. Ana Cannas da Silva. I am deeply grateful to her for introducing me to the world of toric geometry and for the many insightful discussions.

\end{document}